\title{On the chromatic number of powers\\ of subdivisions of graphs}
\author{Michael Anastos\thanks{Institute of Science and Technology Austria (ISTA), Klosterneurburg 3400, Austria. Email: {\tt michael.anastos@ist.ac.at}}, Simona Boyadzhiyska\thanks{School of Mathematics, University of Birmingham, Edgbaston, Birmingham, B15 2TT, UK.
Email: {\tt s.s.boyadzhiyska@bham.ac.uk}}, Silas Rathke\thanks{Fachbereich Mathematik und Informatik, Freie Universität Berlin, Arnimallee 3, 14195 Berlin, Germany. Email: \texttt{s.rathke@fu-berlin.de}}, Juanjo Ru\'e\thanks{Departament de Matem\`atiques, Universitat Polit\`ecnica de Catalunya and Centre de Recerca Matemàtica (CRM), Spain. Email: {\tt juan.jose.rue@upc.edu}} }
\date{}
\newtheorem{theorem}{Theorem}[section]
\newtheorem{claim}[theorem]{Claim}
\newtheorem{lemma}[theorem]{Lemma}
\newcommand{\subs}{\subseteq}
\newcommand{\X}{\chi}
\renewcommand{\(}{\!\left(}
\renewcommand{\)}{\!\right)}
\newcommand{\G}{G^{\frac{2}{3}}}
\newcommand{\D}{\Delta}
\newcommand{\coleq}{\coloneqq}
\newcommand{\abs}[1]{\left\lvert#1\right\rvert}
\setlist[enumerate,1]{label={\textnormal{(\roman*)}}}
\DeclareMathOperator{\dst}{dst}
\DeclarePairedDelimiter{\parens}{(}{)}
\DeclarePairedDelimiter{\set}{\{}{\}}
\begin{document}

\maketitle

\begin{abstract}
For a given graph $G=(V,E)$, we define its \emph{$n$th subdivision} as the graph obtained from $G$ by replacing every edge by a path of length $n$. We also define the \emph{$m$th power} of $G$ as the graph on vertex set $V$ where we connect every pair of vertices at distance at most $m$ in $G$. In this paper, we study the chromatic number of powers of subdivisions of graphs and   resolve the case $m=n$ asymptotically. In particular, our result confirms a conjecture  of Mozafari-Nia and Iradmusa in the case $m=n=3$ in a strong sense. 
\end{abstract}

\section{Introduction}

Let $G=(V,E)$ be a simple graph. A \emph{total colouring} of $G$ is an assignment of colours to its vertices and edges so that no pair of adjacent vertices or edges has the same colour, and no edge has the same colour as either of its endpoints. We denote by $\chi''(G)$ (called the \textit{total chromatic number} of $G$) the minimum number of colours needed in a total colouring of $G$.

The \emph{total colouring conjecture}, posed independently by Vizing in $1964$ \cite{vizing64} and by Behzad~\cite{behzad1965graphs} in his Ph.D.\ dissertation in $1965$, states that, for every simple graph $G$ with maximum degree $\Delta(G)$, we have $\chi''(G)\leq \Delta(G)+2$. 
Nowadays, there are partial advances towards this conjecture. For example, Reed and Molloy proved in \cite{molloyReed98} that, if $\Delta(G)$ is sufficiently large, then
$\chi''(G)\leq \Delta(G)+C$, where $C$ can be taken to be $10^{26}$ (however, the authors state that the constant is not optimised and a detailed analysis could yield a much better constant). Hind, Reed, and Molloy proved in \cite{hindmolloyreed99} that, if $\Delta(G)$ is sufficiently large, then $\chi''(G)\leq \Delta(G)+ 8 (\log\Delta(G))^8$.  We refer the reader to~\cite{survey,MolloyReed,yap} for some history and further results in this line of research.

In this paper, we study generalisations of the total colouring conjecture. 
For a given graph $G=(V,E)$, we define its \emph{$n$th subdivision} as the graph obtained from $G$ by replacing every edge with a path of length $n$. We also define the $m$th power of $G$ as the graph on vertex set $V$ where we connect every pair of vertices at distance\footnote{The \emph{length} of a path $P$ is the number of edges of $P$. The \emph{distance} between two vertices $u$ and $v$ is the minimum length of a $u$-$v$-path.} at most $m$ in $G$. We denote by $G^{\frac{1}{n}}$ and $G^m$ the $n$th subdivision and the $m$th power of $G$, respectively. 
Finally, we define $G^{\frac{m}{n}}$ to be $\left(G^{\frac{1}{n}}\right)^{m}$. For instance, $G^{\frac{1}{1}}=G$. In $G^{\frac{m}{n}}$, the vertices that were already in $G$ are called \emph{branch vertices}, whereas the vertices that were added because of the subdivision are called \emph{inner vertices}.

Note that $\X(G^{\frac{1}{n}})\le 3$ holds for all $n>1$ and all graphs $G$ because we can always assign colour 1 to the branch vertices and then alternatingly colour the inner vertices of the subdivision with the colours 2 and 3.
Observe also that $\chi''(G)=\chi(G^{\frac{2}{2}})$; hence, the total colouring conjecture states that $\chi(G^{\frac{2}{2}}) \leq \Delta(G)+2$. For other powers of subdivisions, it was shown by Iradmusa~\cite[Lemma~3]{iradmusa2010colorings}, and later also by Hartke, Liu, and Pet{\v{r}}{\'\i}{\v{c}}kov{\'a}~\cite[Lemma~2.6]{hartke2012coloring}, that $\chi(G^{\frac{2}{3}}) = \Delta(G)+1$ whenever $\Delta(G)\geq 3$. More generally, the latter group of authors studied the chromatic number of $G^{\frac{m}{n}}$ when $1<m<n$ and determined it up to an additive constant of 2 (see \cite[Theorems~1 and~2]{hartke2012coloring} and \cite[Theorem~1]{iradmusa2010colorings}).  The case $m=n$ is much less understood.
 Wang and Liu proved in \cite{wang2018coloring} that, if $\Delta(G)\leq 3$, then $\chi(G^{\frac{3}{3}})\leq 7$. This was extended by Mozafari-Nia and Iradmusa in \cite{mozafari2021note}, who showed that, if $\Delta(G)\leq 4$, then $\chi(G^{\frac{3}{3}})\leq 9$, and conjectured that $\chi(G^{\frac{3}{3}})\leq 2\Delta(G)+1$. In~\cite{mozafari2022simultaneous,mozafari2023outerplanar,mozafari2024hypercubes}, the same authors checked the validity of this conjecture  for some classes of graphs such as $k$-degenerate graphs, cycles, forests, complete graphs, hypercubes, outerplanar graphs, and regular bipartite
graphs.

Our main result proves this conjecture in  asymptotic form (when $\Delta\rightarrow \infty $), and shows that the multiplicative constant can in fact be taken to be 1.

\begin{theorem}\label{theo:X of G33}
There is a constant $C>0$ such that, for all graphs $G$ with maximum degree~$\D$, \[\X\parens{G^{\frac{3}{3}}}\le \Delta+C\log\Delta.\]
\end{theorem}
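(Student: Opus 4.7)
My plan rests on the following observation about $G^{\frac{3}{3}}$. For each branch vertex $u \in V(G)$, let $C_u := \{u\} \cup \{a(e, u) : e \in E(G),\, u \in e\}$, where $a(e, u)$ denotes the inner vertex on edge $e$ closer to $u$. A direct distance computation in $G^{\frac{1}{3}}$ shows that each $C_u$ is a clique in $G^{\frac{3}{3}}$ of size $\deg(u) + 1 \le \Delta + 1$ and that $\{C_u\}_{u \in V(G)}$ partitions $V(G^{\frac{3}{3}})$. Moreover, for $u \neq v$, edges between $C_u$ and $C_v$ in $G^{\frac{3}{3}}$ exist only when $uv \in E(G)$; in that case, $a(uv, u)$ is adjacent to every vertex of $C_v$ and, symmetrically, $a(uv, v)$ is adjacent to every vertex of $C_u$, while no other cross-pair (apart from $u \sim v$ itself) is adjacent.

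\textbf{Reduction to a matching problem.} Setting $T := \Delta + C\log\Delta$, the plan is to construct a proper $T$-colouring of $G^{\frac{3}{3}}$ as follows. First, fix a proper colouring $\sigma : V(G) \to [\Delta + 1]$ of $G$ via Brooks' theorem. Next, for each $u$, choose a palette $L_u \subseteq [T]$ of size $\deg(u) + 1$ containing $\sigma(u)$, together with a bijection $f_u : \{e \in E(G) : u \in e\} \to L_u \setminus \{\sigma(u)\}$. Colouring each branch vertex $u$ by $\sigma(u)$ and each inner vertex $a(e, u)$ by $f_u(e)$ will be a proper colouring of $G^{\frac{3}{3}}$ provided that for every $uv \in E(G)$ one has $f_u(uv) \in L_u \setminus L_v$: by the cross-clique structure above, this single condition simultaneously guarantees that $a(uv, u)$ differs in colour from every vertex of $C_v$ (and symmetrically $a(uv, v)$ from every vertex of $C_u$) and that all remaining branch-to-inner incidence constraints are met.

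\textbf{Random palettes and the Local Lemma.} I will sample the palettes $L_u$ uniformly and independently at random from $(\deg(u)+1)$-subsets of $[T]$ containing $\sigma(u)$, and then for each $u$ use Hall's marriage theorem to find a valid $f_u$. Hall's condition at $u$ is the inequality
\[
\bigl|(L_u \setminus \{\sigma(u)\}) \setminus \textstyle\bigcap_{uv \in S} L_v\bigr| \ge |S| \qquad \text{for every } S \subseteq \{e \in E(G) : u \in e\}.
\]
Chernoff-type concentration for intersections of random palettes, together with the slack $T - \Delta = C\log\Delta$, should yield that the event ``Hall's condition fails at $u$'' has probability at most $\Delta^{-\Omega(C)}$. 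Since this event depends only on $L_u$ and on $\{L_v : v \in N_G(u)\}$, its dependency degree in the Lovász Local Lemma graph is $O(\Delta^2)$; choosing $C$ large enough that $\Delta^{-\Omega(C)} \cdot \Delta^2 = o(1)$ and applying the LLL will then produce a valid global assignment with positive probability.

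\textbf{Main obstacle.} The hardest part will be establishing sharp enough concentration for Hall's condition uniformly over all subsets $S$. The critical regime is $|S|$ close to $\deg(u)$, where the intersection of many random palettes must be very small with overwhelming probability, and the union bound over the $\binom{\deg(u)}{|S|}$ subsets of that size must still fit inside the $\Delta^{-\Omega(C)}$ budget. Balancing this tail estimate against the $O(\Delta^2)$ LLL dependency degree is precisely what will pin down the constant $C$ in the theorem.
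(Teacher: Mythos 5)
Your structural setup (the clique decomposition into sets $C_u$ and the description of cross-edges) is correct, and the reduction to finding palettes $L_u$ with bijections $f_u$ satisfying $f_u(uv)\in L_u\setminus L_v$ is a genuinely different, more direct route than the paper's. The paper instead uses short auxiliary lists of size $\Theta(\log\Delta)$ (colouring $e^w$ from the list attached to the \emph{other} endpoint $v$, then finding a transversal), accepts that some inner-inner conflicts survive, and fixes them afterwards with a recolouring step based on directed star arboricity. Your scheme would avoid that cleanup step entirely if it worked.

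However, the probability claim at the heart of your plan is false, and this is a structural obstruction rather than a matter of sharpening a Chernoff bound. The issue is that $\sigma(v)$ is forced into $L_v$, so Hall's condition for $S=N_G(u)$ demands that $L_u\setminus\{\sigma(u)\}$ be disjoint from $\bigcap_{v\in N_G(u)}L_v$. Now suppose some colour $c$ satisfies $\sigma(v)=c$ for every $v\in N_G(u)$ — this happens, for instance, for every vertex of a bipartite graph such as $K_{\Delta,\Delta}$ under a $2$-colouring (or, more generally, whenever a colour class of $\sigma$ dominates $N_G(u)$, which Brooks' theorem gives you no control over). Then $c\in\bigcap_{v\in N_G(u)}L_v$ deterministically, and since $|L_u|=\deg(u)+1$ while $|[T]|=\Delta+C\log\Delta$, the probability that $c\in L_u$ is $\frac{\deg(u)}{T-1}=1-\Theta\!\left(\frac{\log\Delta}{\Delta}\right)$, i.e.\ close to $1$. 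So the event ``Hall's condition fails at $u$'' has probability close to $1$, not $\Delta^{-\Omega(C)}$, and the Local Lemma cannot be invoked. Intuitively, your lists are so large (size $\approx\Delta$ out of $\Delta+C\log\Delta$) that they must contain almost everything, and the hard constraint $\sigma(v)\in L_v$ then propagates a single repeated branch-colour into the intersection. The paper's device of tiny lists of size $\Theta(\log\Delta)$ — which only need to supply a system of distinct representatives for $N_G(w)$, not a full palette — is precisely what removes this forcing; the price is that the resulting colouring is not yet proper and one must pay an extra $\Theta(\log\Delta)$ colours to repair the surviving inner-inner conflicts.
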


The constant $C$ in \cref{theo:X of G33} can be taken  to be $28$ when $\Delta$ is sufficiently large, although in our argument we have not attempted to optimise it.
The proof uses some ideas from Guiduli~\cite{Guiduli} as well as Alon, McDiarmid, and Reed~\cite{alon1992star}. In the first paper, the author considers the incidence colouring number $\iota(G)$, which can be seen as the colouring number where one only has to colour the inner vertices of $G^{\frac{3}{3}}$. We show that the proof can be extended by probabilistic means to colour the branch vertices as well.

Concerning lower bounds for this problem, Guiduli~\cite{Guiduli} also notes that, if $G$ is a Paley graph, then $\iota(G)\ge \Delta+\Omega(\log\Delta)$. This implies that, for infinitely many $\D$, there are graphs $G$ with maximum degree $\D$ such that
\[\X\parens{G^{\frac{3}{3}}}\ge\D+\Omega(\log \D).\]

As a byproduct of the ideas developed to prove Theorem \ref{theo:X of G33}, we are also able to obtain the following generalisation for the chromatic number of $G^{\frac{k}{k}}$ when $k\geq 2$:

\begin{theorem}\label{thm:fin}
For every integer $k\geq 2$, there exists a constant $C_k$ such that, for every graph $G$, we have:
$$\bigg\lfloor \frac{k}{2} \bigg\rfloor \Delta(G)\leq \chi(G^{\frac{k}{k}}) \leq  \bigg\lfloor \frac{k}{2}\bigg \rfloor \Delta(G) + C_k\log \Delta(G).$$
\end{theorem}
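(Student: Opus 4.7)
For the lower bound, my plan is to exhibit an explicit clique of size $\ell\Delta$ in $G^{\frac{k}{k}}$, where $\ell=\lfloor k/2\rfloor$. Let $v\in V(G)$ have maximum degree $\Delta$ with neighbours $u_1,\dots,u_\Delta$, and for each $i$ let $v=y_0^{(i)},y_1^{(i)},\dots,y_k^{(i)}=u_i$ be the subdivided path replacing $vu_i$ in $G^{\frac{1}{k}}$. The set
\[
S \;=\; \bigl\{y_j^{(i)}:\; 1\le i\le \Delta,\; 1\le j\le \ell\bigr\}
\]
has $\ell\Delta$ elements, and any two of them are at distance $|j-j'|\le \ell-1<k$ in $G^{\frac{1}{k}}$ when $i=i'$ and at distance $j+j'\le 2\ell\le k$ when $i\ne i'$. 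Hence $S$ is a clique in $G^{\frac{k}{k}}$, which gives $\chi(G^{\frac{k}{k}})\ge \ell\Delta$.

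For the upper bound, the plan is to extend the proof of Theorem~\ref{theo:X of G33}. The key structural observation I would exploit is that every inner vertex of $G^{\frac{1}{k}}$ lies on some subdivided edge $e=uv$ at distance $j\in\{1,\dots,k-1\}$ from $u$, and I \emph{associate} it with the closer endpoint (breaking ties arbitrarily for $k$ even and $j=k/2$). For every branch vertex $w$, the set consisting of $w$ together with the inner vertices associated with $w$ is then a clique in $G^{\frac{k}{k}}$ of size at most $\ell\Delta+1$, and it is precisely this family of cliques that drives the lower bound.

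I would then colour in two stages, mirroring the strategy behind Theorem~\ref{theo:X of G33}. First, I colour the inner vertices of $G^{\frac{k}{k}}$ using $\ell\Delta+O_k(\log\Delta)$ colours: I initialise with a structured base colouring which gives, for each branch vertex $w$, pairwise distinct colours from a palette of size $\ell\Delta$ to the $\ell\deg(w)$ inner vertices associated with $w$ (obtained from a proper edge colouring of an auxiliary multigraph of maximum degree $\ell\Delta$), and then resolve the remaining conflicts --- which only arise between inner vertices belonging to different associations on the same or on adjacent edges --- by an Alon--McDiarmid--Reed style random perturbation. The large girth of $G^{\frac{1}{k}}$ (at least $k$) keeps neighbourhoods in $G^{\frac{k}{k}}$ locally tree-like, so that the dependencies are mild enough for the Lov\'asz Local Lemma to absorb the conflicts using only $O_k(\log\Delta)$ additional colours. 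Second, I extend the colouring to the branch vertices: after the first stage every branch vertex $v$ has at most $\ell\Delta+O_k(\log\Delta)$ forbidden colours --- the $\ell\Delta$ main colours used by the $v$-associated inner vertices, together with the $O_k(\log\Delta)$ extra colours shared by the remaining incident inner vertices and by the branch neighbours of $v$ --- so a short probabilistic rounding completes the colouring within the palette of size $\ell\Delta+C_k\log\Delta$.

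The main obstacle will be generalising Guiduli's argument on the incidence chromatic number (the case $\ell=1$, which underlies Theorem~\ref{theo:X of G33}) to blocks of up to $\ell$ inner vertices per edge-endpoint pair, and in particular organising the first-stage colouring so that the far halves of edges at each branch vertex $v$ reuse the main palette of size $\ell\Delta$ and leave enough room for the branch vertex itself. The even case $k=2\ell$ is the most delicate, since then the midpoint of each subdivided edge participates simultaneously in two adjacent associated cliques, which requires extra care in the base colouring. The constants $C_k$ are expected to depend super-linearly on $k$, but since $k$ is fixed this loss is acceptable.
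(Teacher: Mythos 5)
Your lower bound is correct and matches the paper's idea: the set $S$ you build around a maximum-degree branch vertex is indeed a clique of size $\lfloor k/2\rfloor\Delta$ in $G^{\frac{k}{k}}$ (the distance check through the centre and within each subdivided edge is exactly right), so $\chi(G^{\frac{k}{k}})\ge\lfloor k/2\rfloor\Delta$.

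The upper bound, however, has a genuine gap that stems from the order of your two stages. You colour the inner vertices first and the branch vertices last, and claim that in the second stage a branch vertex $v$ sees only $\ell\Delta+O_k(\log\Delta)$ forbidden colours. This accounting does not hold up. In $G^{\frac{k}{k}}$, a branch vertex $v$ of degree $\Delta$ is adjacent to \emph{all} $k-1$ inner vertices on each of its incident subdivided edges (not only to the $v$-associated half), so it sees the $\ell\Delta$ pairwise-distinct colours of its own associated clique \emph{plus} roughly another $\ell\Delta$ colours on the far halves, and it is additionally adjacent to up to $\Delta$ branch neighbours, which are themselves being coloured in the same second stage and are in no way confined to a sub-palette of size $O(\log\Delta)$. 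Nothing in your setup forces these roughly $(2\ell+1)\Delta$ constraints to collapse to $\ell\Delta+O(\log\Delta)$ colours; in the worst case every colour of the palette already appears in $N_{G^{\frac{k}{k}}}(v)$, and then no ``probabilistic rounding'' can place $v$. You can already see this for $k=2$, where your plan is equivalent to first properly edge-colouring $G$ and then hoping to extend to a total colouring vertex by vertex within a palette of size $\Delta+O(\log\Delta)$, which fails greedily.

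The paper avoids this by reversing the order, exactly as in the proof of Theorem~\ref{theo:X of G33}: it colours the branch vertices \emph{first}, using a proper colouring of $G$ with $\Delta+1$ colours for one parity of $k$, and a total colouring of $G$ with $\Delta+O(1)$ colours (via Molloy--Reed) for the other parity so that the midpoints of the subdivided edges are taken care of at the same time. Only then are the remaining inner vertices coloured, each from a list $L_v\subseteq[\lfloor k/2\rfloor\Delta+C_k'\log\Delta]$ of size $\Theta(\log\Delta)$ attached to the \emph{far} endpoint, with the already-chosen colours of the two endpoints (and of the midpoint, when relevant) subtracted via a small set $F_{v,w}$. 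This subtraction is what makes the branch colours ``free''. The other ingredient your proposal is missing is the combinatorial lemma that replaces the plain transversal of Lemma~\ref{lem:probability of no transversal}: for each $w$ one needs pairwise disjoint $\lfloor k/2\rfloor$-subsets $L_{v,w}\subseteq L_v\setminus F_{v,w}$ for $v\in N_G(w)$, which the paper obtains from the deficiency version of Hall's condition, $\card{N(S)}\ge\lfloor k/2\rfloor\card{S}$, together with a Local Lemma argument analogous to Claim~\ref{claim 1}. Steps~\mref3 and~\mref4 then go through as in Theorem~\ref{theo:X of G33}. To close the gap, you would need to flip the order of your stages and make this generalised Hall/LLL step explicit.
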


This short paper is organised as follows. In \cref{sec:thm1.2}, we recall basic notions concerning the directed linear arboricity of a graph and prove \cref{theo:X of G33}. We sketch the proof of \cref{thm:fin} and discuss the difficulties arising when trying to generalise our method to fractions $\frac{r}{s}$ where $r>s$ in \cref{s:thm1.3}.

\renewcommand{\G}{G^{\frac{3}{3}}}
\section{The chromatic number of $\G$: Proof of \Cref{theo:X of G33}} \label{sec:thm1.2}

The proof of \Cref{theo:X of G33} follows the arguments from the proof of Theorem 3.1 in \cite{Guiduli}. There, the author considers the  \emph{directed star arboricity} $\dst(D)$ of a directed graph $D$, defined as the smallest number of directed star forests needed to cover $D$, where the edges of the star are directed away from the centre. The directed star arboricity is closely connected to the \emph{incidence colouring number} $\iota(G)$ of a graph $G$. This is the smallest number of colours needed to colour the vertex-edge pairs $(v,e)$, with $v\in e$, of $G$ in such a way that $(v,e)$ and $(w,f)$ receive different colours if $v=w$ or $vw=e$ or $vw=f$. 
By viewing the pair $(v,e)$ as an orientation of $e$ towards $v$, we observe the following connection between these two notions: If $S(G)$ is the directed graph where each edge of $G$ is replaced by both directed edges, then $\iota(G)=\dst(S(G))$. In \cite{Guiduli}, Guiduli showed that $\dst(D)\le k+20\log k+84$, where $k$ is the larger of the maximum indegree and the maximum outdegree of $D$.

For an edge $e=vw$ in $G$, we write $e^v$ for the neighbour of $v$  in $G^{\frac{1}{3}}$ on the subdivision of~$e$.
In other words, the edge $e=vw$ in $G$ defines the path $v, e^v,e^w,w$ in $G^{\frac{1}{3}}$. 
In $\G$, the vertices $e^v, e^w$ are inner vertices, whereas the vertices $v$ and $w$ are branch vertices. 
For a branch vertex $v$, let $I_v\coleq\lbrace e^v:e\in E(G)\rbrace$ be the set of all inner vertices which are neighbours of $v$ in $G^{\frac{1}{3}}$. Note that  $\{I_v\}_{v\in V(G)}$ partitions the set of inner vertices.

If we identify the inner vertex $e^v$ of $\G$ with the incidence pair $(v,e)$, we can quickly see that $\iota(G)$ is the colouring number of $\G$ when we only have to colour the inner vertices. In particular, $\X\parens{ \G } \ge \iota(G)$. We can easily complete a colouring of the inner vertices to a colouring of all of $\G$ by using $\chi(G)\leq \Delta(G)+1$ additional colours. Our result shows that we can in fact accomplish this task with only logarithmically many (in $\D$) additional colours. Hence, \Cref{theo:X of G33} is a slight generalisation of Theorem 3.1 in~\cite{Guiduli}.

We will follow the proof of \cite{Guiduli} and adjust it in such a way that it becomes apparent that we can also colour the branch vertices of $\G$.
The proof uses a version of the Lovász Local Lemma (Lemma 3.4 in \cite{Guiduli}), which we restate here for completeness.

\begin{lemma}\label{LLL}
    Let $H$ be a simple graph on vertex set $V=[n]$ with maximum degree $\Delta(H)\le d$. A probability event $A_i$ is associated with each vertex $i\in V$ such that $\Pr(A_i)\le 1/(4d)$ and the event $A_i$ is independent of all $A_j$ for which $j$ is not adjacent to $i$ in $H$. Then $\Pr(\bar{A_i}\cap\dots\cap\bar{A_n})>0$.
\end{lemma}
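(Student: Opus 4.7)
The plan is to prove the stronger inductive claim that, for every $S \subseteq [n]$ and every $i \in [n] \setminus S$,
\[ \Pr\left(A_i \;\middle|\; \bigcap_{j \in S} \bar A_j\right) \le \frac{1}{2d}, \]
and then deduce the lemma by a telescoping product. Indeed, once the inductive claim is in hand, ordering the indices arbitrarily and applying the chain rule yields
\[ \Pr\left(\bigcap_{i=1}^n \bar A_i\right) = \prod_{i=1}^n \Pr\left(\bar A_i \;\middle|\; \bigcap_{j<i} \bar A_j\right) \ge \left(1 - \frac{1}{2d}\right)^n > 0. \]

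To prove the claim, I would induct on $|S|$. The base case $|S|=0$ is immediate from the hypothesis, since $\Pr(A_i) \le 1/(4d) \le 1/(2d)$. For the inductive step, partition $S = S_1 \cup S_2$, where $S_1 = \{j \in S : ij \in E(H)\}$ collects the $H$-neighbours of $i$ in $S$ and $S_2 = S \setminus S_1$ the non-neighbours, and rewrite the conditional probability as a ratio:
\[ \Pr\left(A_i \;\middle|\; \bigcap_{j \in S} \bar A_j\right) = \frac{\Pr\left(A_i \cap \bigcap_{j \in S_1} \bar A_j \;\middle|\; \bigcap_{k \in S_2} \bar A_k\right)}{\Pr\left(\bigcap_{j \in S_1} \bar A_j \;\middle|\; \bigcap_{k \in S_2} \bar A_k\right)}. \]
For the numerator, I would invoke the mutual independence of $A_i$ from the family $\{A_k : k \in S_2\}$, which is guaranteed by the hypothesis because every $k \in S_2$ is a non-neighbour of $i$ in $H$; this drops the $\bar A_k$ conditioning and bounds the numerator by $\Pr(A_i) \le 1/(4d)$. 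For the denominator, a union bound together with the inductive hypothesis applied to each $j \in S_1$ (legitimate since $|S_2| < |S|$) gives
\[ \Pr\left(\bigcap_{j \in S_1} \bar A_j \;\middle|\; \bigcap_{k \in S_2} \bar A_k\right) \ge 1 - \sum_{j \in S_1} \frac{1}{2d} \ge 1 - \frac{|S_1|}{2d} \ge \frac{1}{2}, \]
using $|S_1| \le \Delta(H) \le d$. Taking the ratio produces the bound $1/(2d)$ and closes the induction.

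The main obstacle is essentially bookkeeping: one must handle the degenerate case $\Pr(\bigcap_{k \in S_2} \bar A_k) = 0$, where the conditional probabilities are undefined, by pruning $S_2$ down to the largest subset for which the conditioning has positive probability — an operation which only strengthens the bounds and never invalidates them. The other subtle point is that the independence step in the numerator genuinely uses joint (not merely pairwise) independence of $A_i$ from $\{A_k : k \in S_2\}$, which is exactly the mutual-independence form supplied by the hypothesis of the lemma.
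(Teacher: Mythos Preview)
Your proof is correct and is the standard inductive argument for the symmetric Lov\'asz Local Lemma. Note, however, that the paper does not supply its own proof of this lemma: it merely restates it from \cite{Guiduli} (Lemma~3.4 there) for completeness, so there is no in-paper argument to compare against. Your write-up would serve perfectly well as the omitted proof.

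Two minor remarks. First, the degenerate case you flag cannot actually occur: the induction itself shows $\Pr\bigl(\bigcap_{k\in S_2}\bar A_k\bigr)\ge (1-1/(2d))^{|S_2|}>0$, so no pruning is ever needed. Second, you are right to insist on the mutual-independence reading of the hypothesis; the pairwise reading would not suffice, and the lemma as stated must be interpreted that way.
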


Given a graph $G$ with maximum degree $\Delta$, our general strategy for colouring $\X\parens{\G}$ will be the following:
\newcommand{\mref}[1]{\hyperref[Step #1]{#1)}}
\begin{enumerate}
    \item[1)]\label{Step 1} Find a proper colouring $c$ of the branch vertices of $\G$ using colours in $[\Delta+1]$.
    \item[2)]\label{Step 2} For each branch vertex $v$ of $\G$, find a list $L_v\subseteq [\Delta+C \log \Delta]$ of colours ($C$ will be defined explicitly later) of size $\Theta(\log\D)$. 
    For a branch vertex $w$ and an edge $e=wv$ of $G$, we will colour the inner vertex $e^w$ with a colour from the list $L_v$. For that, we will require the  
    family $\{L_v\}_{v\in V(G)}$ to have the following property: for each branch vertex $w$, $\big\lbrace L_v\setminus\lbrace c(v),c(w)\rbrace\big\rbrace_{v\in N_G(w)}$ has a transversal\footnote{A \emph{transversal} of a family $S_1,\dots,S_m$ of sets consists of $m$ distinct elements $x_1,\dots,x_m$ such that $x_i\in S_i$.} $T_w$.
    \item[3)]\label{Step 3} Colour the inner vertices of $\G$ around $w$ according to the transversal $T_w$ obtained in Step \mref2. 
\end{enumerate}

At this point of the colouring process, the only monochromatic edges that can occur are of the form $e^vf^w$, where $e=vw$ and we allow $f=e$. But this can only happen if $L_w$ contains the colour of $f^w$ (note that the colour of $e^v$ comes from $L_w$). Thus, for each branch vertex $v$, there are at most $\Theta(\log\D)$ many inner vertices of the form $e^v$ that have to be recoloured. As a final step, we use a small number of additional colours to resolve the conflicts: 

\begin{enumerate}
    \item[4)]\label{Step 4}  Use $\Theta(\log \D)$ new colours to recolour every $e^v$ for which there exists a monochromatic edge of the form $e^vf^w$ without creating new monochromatic edges.    
\end{enumerate}

Step \mref1 is implemented by invoking the \texttt{Greedy Colouring Algorithm}. Therefore, we only need to justify the existence of transversals in Step \mref2, as well as the fact that $\Theta(\log \D)$ of new colours suffice for the recolouring in Step \mref4.

 Before going into the proof of Step \mref2 (which is based on the Lovász Local Lemma) we need to prove \cref{lem:probability of no transversal}, stated below. A version of \cref{lem:probability of no transversal} where $r$ is taken to be at least $5\log k+20$ and all $F_1,...,F_k$ equal the empty set was proven in~\cite{alon1992star} (see Lemma 2.5 therein). The two proofs are very similar and are based on verifying Hall's condition.
\begin{lemma}\label{lem:probability of no transversal}
    There exists an integer $k_0$ such that, for all $k\geq k_0$ and every integer~$r$ satisfying $7\log k \leq r\leq k$, the following holds:
    Let $S_1,\dots, S_k$ be independent random subsets of $[k+r]$, each of which is generated by sampling $r$ elements from $[k+r]$ uniformly and independently at random with replacement.
    Furthermore, let $F_1,\dots,F_k$ be arbitrary fixed subsets of $[k+r]$ of size two. Then the probability that the family of sets \mbox{$\{S_1\setminus F_1,\dots,S_k\setminus F_k\}$} does not have a transversal is at most $k^{1-\frac{r}{5}}$.
\end{lemma}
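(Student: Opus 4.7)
My plan is to apply Hall's marriage theorem followed by a union bound, following the scheme of Lemma~2.5 in \cite{alon1992star}. If $\{S_1\setminus F_1,\dots,S_k\setminus F_k\}$ admits no transversal, then by Hall's theorem there exists a nonempty $J\subseteq [k]$ with $\left|\bigcup_{i\in J}(S_i\setminus F_i)\right|<|J|$; choosing such a $J$ of minimum size $j:=|J|$, minimality forces $\left|\bigcup_{i\in J}(S_i\setminus F_i)\right|=j-1$. Setting $A:=\bigcup_{i\in J}(S_i\setminus F_i)$, each $S_i$ with $i\in J$ lies in $A\cup F_i$, a set of size at most $j+1$. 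Since the $S_i$ are independent samples of size $r$ from $[k+r]$, the probability of each such inclusion is at most $((j+1)/(k+r))^r$, and a union bound over the choices of $(J,A)$ with $|J|=j$ and $|A|=j-1$ gives
\[\Pr(\text{no transversal})\ \le\ \sum_{j=1}^{k}\binom{k}{j}\binom{k+r}{j-1}\left(\frac{j+1}{k+r}\right)^{rj}.\]

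The remaining task is to show that this sum is bounded by $k^{1-r/5}$ whenever $7\log k\le r\le k$ and $k$ is sufficiently large. Using the crude estimate $\binom{k}{j}\binom{k+r}{j-1}\le (k+r)^{2j}$, each summand is at most $\left((j+1)/(k+r)\right)^{(r-2)j}$, so it suffices to verify
\[j(r-2)\log\frac{k+r}{j+1}\ \ge\ \left(\frac{r}{5}+1\right)\log k\]
uniformly for $1\le j\le k$; the outer sum over $j\le k$ will then absorb an extra factor of $k$ and produce $k^{1-r/5}$.

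I anticipate the main obstacle to be establishing this last inequality in the regime where $j$ is close to $k$, since there the factor $\log((k+r)/(j+1))$ is small. A natural split is $j\le k/2$ versus $j>k/2$. For $j\le k/2$, the logarithm is at least $\log 2$, and the bound $r\ge 7\log k$ handles this range with room to spare (and for $j\le \sqrt{k}$ the logarithm is in fact of order $\log k$, providing additional slack for the very small $j$). For $j>k/2$ one uses the first-order Taylor estimate $\log((k+r)/(j+1))\ge \log((k+r)/(k+1))\ge (r-1)/(2(k+1))$, which makes the left-hand side $\Omega(r^2)$; the inequality then reduces to $r^2\gtrsim r\log k$, precisely what $r\ge 7\log k$ delivers. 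The only new ingredient beyond the argument in \cite{alon1992star} is that the sets $F_i$ enlarge the bound on $|A\cup F_i|$ from $|A|=j-1$ to $j+1$, which is comfortably absorbed by the gap between the present hypothesis $r\ge 7\log k$ and the assumption $r\ge 5\log k+20$ used there.
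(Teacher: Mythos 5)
Your high-level plan — Hall's condition, a union bound over the size $j$ of a violating set, and a case split on $j$ — matches the paper's, and your sharpening (taking $J$ minimal so that $|A|=j-1$, hence $|A\cup F_i|\le j+1$) is sound and slightly cleaner than the paper's bound. The problem is in the regime $j>k/2$, where two issues compound. First, a bookkeeping error: $(k+r)^{2j}\bigl(\frac{j+1}{k+r}\bigr)^{rj}$ is not $\bigl(\frac{j+1}{k+r}\bigr)^{(r-2)j}$; it equals $(j+1)^{2j}\bigl(\frac{j+1}{k+r}\bigr)^{(r-2)j}$, so passing to your target inequality $j(r-2)\log\frac{k+r}{j+1}\ge(\frac{r}{5}+1)\log k$ silently drops a term $2j\log(j+1)$ from the left-hand side.

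Second, and more fundamentally, the uniform estimate $\binom{k}{j}\binom{k+r}{j-1}\le(k+r)^{2j}$ (and even the correct $\le(e(k+r)/j)^{2j}$) is far too lossy for $j$ near $k$: there the true size of $\binom{k+r}{j-1}$ is controlled by the small complementary index $k+r-j+1$, e.g.\ $\binom{k+r}{k-1}=\binom{k+r}{r+1}\le(k+r)^{r+1}$, which is nothing like $(k+r)^{2k}$. Your Taylor estimate for $j>k/2$ makes the left side of the target inequality $\Omega(r^2)$, which indeed beats $\frac{r}{5}\log k$ when $r\ge 7\log k$; but it cannot absorb the dropped $2j\log(j+1)=\Omega(k\log k)$, nor even the milder $e^{\Theta(j)}$ error from the corrected binomial bound, since both dwarf $r^2=O((\log k)^2)$ at the bottom of the allowed range for $r$. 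The repair is exactly the paper's Case~1: for large $j$ rewrite $\binom{k+r}{j-1}=\binom{k+r}{k+r-j+1}\le(k+r)^{k+r-j+1}$ (and similarly $\binom{k}{j}\le(k+r)^{k-j}$), so the prefactor is $(k+r)^{O(k+r-j)}$ and the decaying term $\bigl(\frac{j+1}{k+r}\bigr)^{rj}\le e^{-rj(k+r-j-1)/(k+r)}$ carries a matching factor of $(k+r-j)$ in its exponent. Your treatment of $j\le k/2$, where $\log\frac{k+r}{j+1}\ge\log 2$, is fine.
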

Note that the number $5$ in the exponent is not optimal. A more careful analysis can lead to a better constant.
\begin{proof}
Our goal is to show that the family $\{S_1\setminus F_1,\dots,S_k\setminus F_k\}$ violates Hall's condition, and therefore has no transversal, with probability at most~$k^{1-\frac{r}{5}}$.
 For $j\in[k]$, let $P_j$ be the probability that there is a set $J\subs[k]$ of size $j$ with $\abs{\bigcup_{i\in J}S_i\setminus F_i}<\abs J$. Our aim is to show that $P_j\leq k^{-\frac{r}{5}}$ for each $j$, which implies that Hall's Theorem is violated with probability at most $\sum_{j=1}^k P_j\leq k^{1-\frac{r}{5}}$. 
 We have  \[P_j\le\binom{k}{j}\binom{k+r}{j}\parens*{\frac{j+2}{k+r}}^{rj}\le \binom{k+r}{j}^2\parens*{\frac{j+2}{k+r}}^{rj},\]
    since there are $\binom{k}{j}$ ways to choose $J\subs[k]$ with $|J|=j$, $\binom{k+r}{j}$ ways to pick a subset $S\subs[k+r]$ of size $j$, and $\parens*{\frac{j+2}{k+r}}^{rj}$ is an upper bound for the probability that $\bigcup_{i\in J}S_i\setminus F_i \subs S$ (this implies that $\abs{\bigcup_{i\in J}S_i\setminus F_i}\leq \abs J$, which contains the event $\abs{\bigcup_{i\in J}S_i\setminus F_i}< \abs J$).

    In order to study this quantity we need to distinguish three cases. In all cases, $k$ will be sufficiently large. 

    \begin{enumerate}
    \item[\textbf{Case 1:}]
    $\frac{k+r}{2}\le j\le k$. Then,
    \begin{align*}
        P_j&\le \binom{k+r}{k+r-j}^2\(1-\frac{k+r-j-2}{k+r}\)^{rj}\\
        &\le (k+r)^{2(k+r-j)}\exp\parens*{-\frac{rj(k+r-j-2)}{k+r}}\\
        &= \exp\Big((k+r-j)\Big(2\log(k+r)-\frac{k+r-j-2}{k+r-j}\cdot \frac{rj}{k+r}\Big)\Big)\\
        &\le \exp\Big((k+r-j)\Big(2\log(k+k)-\frac{2}{3}\cdot \frac{r}{2}\Big)\Big)\\
        &\le \exp\Big((k+r-j)\Big(2\log k + 2 - \frac{r}{3}\Big)\Big).\\
\end{align*}
For the second inequality we used the inequalities $\binom{a}{b}\le a^b$ and $1-x\leq e^{-x}$. Furthermore, we used the assumptions that   $\frac{k+r}{2}\le j\le k$ and $k\geq k_0$ is sufficiently large to deduce that $\frac{j}{k+r} \ge \frac1{2}$ and $ \frac{k+r-j-2}{k+r-j} \ge \frac{2}{3}$, used in the fourth inequality.
Since $r\geq 7\log k$ and $k\geq k_0$ is sufficiently large, the above quantity is bounded above by
        \begin{align*}
 \exp\parens*{-(k+r-j)\parens*{\frac{1}{4}\log k - 2}}
        \le \exp\Big(-\frac{r}{5}\log k\Big).
    \end{align*}    

\item[\textbf{Case 2:}] $\log k\le j\le \frac{k+r}{2}$. In this case we have
    \begin{align*}
        P_j&\le \binom{k+r}{j}^2\(\frac{j+2}{k+r}\)^{rj}
        \le \(\frac{e(k+r)}{j}\)^{2j}\(\frac{1.1\cdot j}{k+r}\)^{rj}\\
    &= \(e^2 \cdot 1.1^r \(\frac{j}{k+r}\)^{r-2}\)^j 
    \le \(e^2 \cdot  1.1^r \(\frac{1}{2}\)^{\frac{r}{2}}\)^{\log k}
        \le\exp\Big(-\frac{r}{5}\log k\Big).
    \end{align*}
For the second inequality, we used that $\binom{n}{x}\leq \left(\frac{en}{x}\right)^x$ for every integer $1\le x \le n$. We used the assumptions that $\log k\le j\le \frac{k+r}{2}$ and $k\geq k_0$ is sufficiently large to deduce that $j+2\le 1.1j$ and $\frac{j}{k+r}\leq \frac{1}{2}$ in the second and third inequalities respectively. For the last inequality, we used that $1.1^r\left(\frac{1}{2}\right)^{\frac{r}{2}}\le \exp\left(-\frac{r}{4}\right)$ holds for all positive $r$. 
    \item[\textbf{Case 3:}] $1\le j\le\log k$. Then  
$$\frac{3j}{k+r}\leq \frac{3\log k}{k}\leq \frac{1}{\sqrt{k}}.$$
Hence,
    \begin{align*}
        P_j&\le \binom{k+r}{j}^2\(\frac{j+2}{k+r}\)^{rj}
        \le \(\frac{e(k+r)}{j}\)^{2j}\(\frac{3\cdot j}{k+r}\)^{rj}\\
        & = \(e^23^2 \(\frac{3j}{k+r}\)^{r-2}\)^j \le \(100 \(\frac{1}{\sqrt{k}}\)^{r-2}\)^j \leq \parens*{\frac{1}{\sqrt{k}}}^{r-3} \\
 &      = \exp\parens*{-\frac12(r-3)\log k} \le\exp\Big(-\frac{r}{5}\log k\Big).
    \end{align*}
In the second inequality,  we again used that $\binom{n}{x}\leq \left(\frac{en}{x}\right)^x$.

\end{enumerate}  
    This shows that, for each $j\in[k]$, we have $P_j\le k^{-\frac{r}{5}}$, as needed.
\end{proof}

We are now ready to prove \cref{theo:X of G33}.

\begin{proof}[Proof of \Cref{theo:X of G33}.]
    Let $\D$ be large enough. We will start by defining a (not necessarily proper)  colouring of $\G$ using at most $\D+7\log \D$ colours. Let $V:=V(G)$ be the set of branch vertices of $\G$. We start by taking a proper colouring of $G$ with $\Delta+1$ colours, $c\colon V(G)\to [\D+1]$, which exists by the \texttt{Greedy Colouring Algorithm}.
    Such a colouring defines a colouring on the branch vertices of $\G$ in  which two incident branch vertices have different colours.
    
    We continue with Step \mref2, which is the content of the following claim:
    \begin{claim}\label{claim 1} There exists an assignment of a list $L_v\subs[\D+7\log \D]$ of size $7\log\D$ to each vertex $v\in V$ such that, for each $w\in V$, the set family $\set{L_v\setminus\set{c(v),c(w)}: v\in N_G(w)}$ has a transversal $T_w$.
    \end{claim}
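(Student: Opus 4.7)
The plan is to construct each list $L_v$ at random—exactly as in the hypothesis of \cref{lem:probability of no transversal}—and then apply the Lovász Local Lemma (\cref{LLL}) to show that, with positive probability, the resulting family of lists simultaneously admits a transversal $T_w$ for every branch vertex $w$. This has a chance of working because \cref{lem:probability of no transversal} gives a bound on the probability that a given $w$ lacks such a transversal that decays much faster than any polynomial in $\Delta$, whereas the pairwise dependencies between the bad events are only of order $\Delta^2$.

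Concretely, I set $r:=7\log\Delta$, independently sample each $L_v\subseteq[\Delta+r]$ by drawing $r$ elements of $[\Delta+r]$ uniformly and independently with replacement (padding arbitrarily so that $|L_v|=r$; padding can only help below), and, for each branch vertex $w$, define the bad event $A_w$ that the family $\{L_v\setminus\{c(v),c(w)\}:v\in N_G(w)\}$ admits no transversal. Since $c$ is a proper colouring, $c(v)\neq c(w)$ for each $v\in N_G(w)$, so each ``forbidden set'' $F_v=\{c(v),c(w)\}$ has size exactly $2$, matching the setup of \cref{lem:probability of no transversal}. To bound $\Pr(A_w)$, one reruns the proof of that lemma: the only modification needed is that the ground set has size $\Delta+r$ rather than $k_w+r$ (where $k_w:=\deg_G(w)\leq\Delta$), but this only replaces $k$ by $\Delta$ in the key estimate $P_j\leq\binom{k_w}{j}\binom{\Delta+r}{j}((j+2)/(\Delta+r))^{rj}$ and leaves the three-case analysis intact, giving $P_j\leq \Delta^{-r/5}$ for every $1\leq j\leq k_w$ and hence $\Pr(A_w)\leq \Delta^{1-r/5}=\Delta^{1-\frac{7}{5}\log\Delta}$.

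For the dependency structure, $A_w$ depends only on $\{L_v:v\in N_G(w)\}$, so $A_w$ is mutually independent of every $A_{w'}$ for which $w$ and $w'$ share no common neighbour in $G$. Thus the maximum degree of the dependency graph is at most $\Delta(\Delta-1)<\Delta^2$. Since $\Delta^{1-\frac{7}{5}\log\Delta}<1/(4\Delta^2)$ for all sufficiently large $\Delta$, \cref{LLL} (applied with $d=\Delta^2$) yields positive probability that no $A_w$ occurs, producing the desired lists and transversals. I expect the only real technicality in this plan to be the ground-set mismatch just noted—adapting the hypothesis of \cref{lem:probability of no transversal} to our slightly larger ground set and the possibility that $k_w<\Delta$—but since this mismatch only makes the bound on $P_j$ smaller, it costs nothing in the argument.
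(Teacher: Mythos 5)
Your argument is essentially identical to the paper's proof of Claim~\ref{claim 1}: sample the lists independently with replacement, bound $\Pr(A_w)$ via \cref{lem:probability of no transversal}, observe the dependency graph has degree $O(\Delta^2)$, and apply \cref{LLL}. You are in fact a bit more careful than the paper on two minor technicalities it glosses over (the possibility that $\deg_G(w)<\Delta$, and padding to make $|L_v|$ exactly $7\log\Delta$), and your reasoning that both only help the bound is correct.
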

    \begin{proof}[Proof of Claim \ref{claim 1}.]
        For each $v\in V$, generate $L_v$ randomly by performing $7\log\D$ independent uniform samplings from $[\D+7\log \D]$. Let $B_w$ be the bad event that the family $\set{L_v\setminus\set{c(v),c(w)}:v\in N_G(w)}$ does not have a transversal. By \Cref{lem:probability of no transversal}, we have $\Pr(B_w)\le \D^{1-\frac{7\log\D}{5}}$. Furthermore, $B_w$ is independent of all but at most $\D^2$ other $B_v$'s, namely those corresponding to vertices at distance at most two from $w$ in $G$. Hence, the dependency graph has degree at most $\D^2$ and $\Pr(B_w)\le 1/(4\D^2)$ for $\D$ large enough. Hence, applying the Lovász Local Lemma (\Cref{LLL}) gives that the probability that no bad event happens is positive. In particular, the required list assignment exists.
    \end{proof}

    Let $\{L_v\}_{v\in V}$ be a collection of lists such that all the transversals $T_w$ exist, guaranteed by Claim \ref{claim 1}. Now we extend the colouring $c$ to the inner vertices: For each edge $e=vw$, let $c(e^v)$ be the transversal element of $T_v$ corresponding to the set $L_w\setminus\set{c(v),c(w)}$. This colouring is not necessarily proper, but note that there cannot be a monochromatic edge between a branch vertex and an inner vertex because we excluded $\set{c(v),c(w)}$ from $L_w$ in $T_v$. Furthermore, an edge of the form $e_1^ve_2^v$ can also not be monochromatic since these two colours come from the same transversal $T_v$.

    The only conflicts that remain are those between two inner vertices of the form $e^v$ and $f^w$ with $v\ne w$. Since they are connected in $\G$, they must be of distance at most three in $G^{\frac{1}{3}}$. Unless $f=e=vw$, this shortest path must pass through either $v$ or $w$. We call this branch vertex the \emph{corresponding branch vertex} for the conflict $(e^v,f^w)$. If $f=e=vw$, we choose $v$ or $w$ arbitrarily and call it the corresponding branch vertex. 

    Now consider a conflicting pair $(e^v,f^w)$ of inner vertices and assume, without loss of generality, that $v$ is the corresponding branch vertex. It follows that $f=vw$ and $c(e^v)=c(f^w)\in L_v$. Therefore, if for each $v\in V$ we properly recolour all $e^v$ with $c(e^v)\in L_v$ with new colours, then we have found a proper colouring of $\G$. Let $I_v$ be the set of all inner vertices $e^v$ satisfying $c(e^v)\in L_v$. Observe that, as there are no conflicts of the form $(e^v,f^w)$ with $v= w$,  all the colours $c(e^v)$ for $e^v\in I_v$ are distinct and contained in $L_v$. Thus $|I_v|\leq |L_v|$.

    \begin{claim}\label{claim 2} There is a proper colouring of $\bigcup_{v\in V}I_v$ using at most $21\log\D$ new colours.
    \end{claim}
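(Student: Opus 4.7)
My plan is to bound the maximum degree of the induced subgraph $H$ of $\G$ on $\bigcup_{v \in V} I_v$, and then deduce Claim~2 by a greedy colouring argument.

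Fix a vertex $(vw)^v \in I_v$. Its neighbours in $H$ split into three groups: at most $|I_v|-1 \le 7\log\Delta - 1$ other elements of $I_v$ (at distance $2$ in $G^{\frac{1}{3}}$ through $v$, case~(a)); at most $|I_w| \le 7\log\Delta$ elements of $I_w$ reached via paths through $w$ (case~(b1)); and elements of the form $(vu)^u \in I_u$ for $u \in N_G(v) \setminus \{w\}$, reached via the length-$3$ path $(vw)^v, v, (vu)^v, (vu)^u$ (case~(b2)). Set $M_v \coleq \set{(vu)^u \in I_u : u \in N_G(v)}$; the third group has size at most $|M_v|$. One observes that any two distinct elements of $M_v$ are at distance $4$ in $G^{\frac{1}{3}}$, so $M_v$ is an independent set in $\G$. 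The goal is then to show $|M_v| \le 7\log\Delta$ for every $v$, which gives $\Delta(H) \le 21\log\Delta - 1$ and allows a greedy colouring of $H$ with $21\log\Delta$ colours.

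To bound $|M_v|$, I would strengthen the Local Lemma application in Claim~1 by incorporating new bad events $C_v = \set{|M_v| > 7\log\Delta}$. For each $u \in N_G(v)$, the colour $c((vu)^u)$ lies in $L_v \setminus \set{c(v), c(u)}$, and over the randomness of the list $L_u$ this colour belongs to $L_u$ with probability at most $|L_u|/(\Delta + 7\log\Delta) = 7\log\Delta/(\Delta + 7\log\Delta)$; summing over $u \in N_G(v)$ gives $\mathbb{E}\brackets*{|M_v|} \le 7\log\Delta$. A Chernoff-type estimate then yields $\Pr\brackets*{C_v}$ polynomially small in $\Delta$, while $C_v$ depends only on the lists at vertices within distance $2$ of $v$ in $G$, so the dependency degree remains polynomial and the Local Lemma produces lists and transversals simultaneously satisfying the conclusions of Claim~1 and the bound $|M_v| \le 7\log\Delta$.

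The main obstacle is the concentration step for $|M_v|$: the expectation $\mathbb{E}\brackets*{|M_v|}$ can be almost exactly the threshold $7\log\Delta$ in the worst case (when $\deg_G(v) \approx \Delta$), so a direct Chernoff bound at the mean is borderline. Overcoming this requires either a slight enlargement of the palette used in Claim~1, or a finer analysis exploiting the observation that each element of $M_v$ contributes a colour lying in the typically small intersection $L_v \cap L_u$. Once this technical point is resolved, the greedy colouring concludes the proof.
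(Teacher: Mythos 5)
Your plan is genuinely different from the paper's and it has a real gap that you yourself identify. The paper does not bound the maximum degree of the induced conflict subgraph $H$ at all. Instead it forms the auxiliary digraph $D$ on $V(G)$ with a directed edge $(w,v)$ for every $e^v\in S$ where $e=vw$, notes that the \emph{indegree} of $v$ in $D$ is exactly $|I_v|\le 7\log\D$, and then invokes Guiduli's Lemma 3.2 (restated as \cref{lem:aux}): a digraph with indegrees at most $c$ has directed star arboricity at most $3c$. The crucial point is that each directed star forest (stars directed away from their centres) corresponds to an independent set of inner vertices in $\G$ — precisely the structural fact you observe when you note that distinct elements of $M_v$ are pairwise at distance~$4$. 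So each of the $\le 21\log\D$ star forests may receive a single new colour. In other words, the out-neighbourhoods of $D$ (your sets $M_v$) are independent sets in $\G$, so their \emph{sizes never need to be bounded}: the arboricity argument only uses the indegree bound, while the outdegrees may be as large as $\D$. You noticed exactly the right structural fact but then fell back to greedy colouring, which forces you to bound $|M_v|$ and lands you on the wrong side of the threshold.

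The specific gap in your probabilistic step is not merely "borderline": $\mathbb{E}[|M_v|]$ sits essentially at $7\log\D$, so no concentration inequality can push $|M_v|$ below that value with high probability, and enlarging the palette from $\D+7\log\D$ to $\D+C'\log\D$ only changes the expectation by lower-order terms. There is also a structural obstruction you do not address: $M_v$ depends not only on the random lists but also on the transversals $T_u$, which are chosen \emph{after} the Local Lemma guarantees that all the bad events $B_w$ fail. You therefore cannot cleanly add events $C_v=\{|M_v|>7\log\D\}$ to the same Local Lemma application, since the transversals (hence $M_v$) are not yet defined at the moment the lists are sampled. Finally, even if both issues were repaired, the indicators $\mathbb{1}[c((vu)^u)\in L_u]$ for distinct $u\in N_G(v)$ are not independent — each $c((vu)^u)$ depends on $L_{v'}$ for all $v'\in N_G(u)$, and these neighbourhoods overlap at $v$ and possibly elsewhere — so a naive Chernoff bound does not apply. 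Aiming for, say, $|M_v|\le 14\log\D$ with a more careful concentration argument would still give only a weaker constant than the paper's $21\log\D$. Switching to the directed star arboricity decomposition resolves all of these difficulties at once.
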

In the proof of Claim \ref{claim 2} we use the following auxiliary lemma, taken from \cite{Guiduli}.
\begin{lemma}[Lemma 3.2 in \cite{Guiduli}]\label{lem:aux}
    Let $D$ be a directed graph (with possible multiple edges) such that every vertex has indegree at most c. Then $dst(D) \leq 3c$.
\end{lemma}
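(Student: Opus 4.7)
The plan is to encode each inner vertex to be recoloured as an arc in an auxiliary directed multigraph on $V(G)$, so that conflicts in $\G$ among $\bigcup_{v\in V}I_v$ translate into forbidden configurations inside a single directed star forest; then \cref{lem:aux} will supply the desired colouring.

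First, I will construct the directed multigraph $\vec{M}$ on vertex set $V=V(G)$ by adding, for each $e^v\in I_v$ with $e=vw$, a directed arc from $w$ to $v$. Each arc of $\vec{M}$ with head $v$ corresponds to a unique inner vertex in $I_v$, so the indegree of any vertex in $\vec{M}$ is at most $|I_v|\le |L_v|=7\log\D$. Applying \cref{lem:aux} with $c=7\log\D$ then partitions the arcs of $\vec{M}$ into at most $3\cdot 7\log\D=21\log\D$ directed star forests $F_1,\dots,F_{21\log\D}$. I will recolour the inner vertex $e^v\in I_v$ with the index $i$ such that its associated arc lies in $F_i$.

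To verify that this recolouring is proper on $\bigcup_{v\in V}I_v$, I will run a short case analysis on pairs of inner vertices $e^v,f^u$ that are adjacent in $\G$. Recalling that such a pair is at distance at most three in $G^{\frac{1}{3}}$, the possibilities are: $u=v$ (i.e., $f^v\in I_v$); $u=w$ and $f=e$ (both endpoints of the same subdivided edge are recoloured); $u=w$ with $f\ne e$ incident to $w$; and the symmetric case where $f=vu'$ with $u'$ the other endpoint of $f$. Under the chosen orientation, these translate respectively into: two arcs sharing the same head $v$; a pair of opposite arcs between $v$ and $w$; and two arcs for which the shared branch vertex is the head of one and the tail of the other. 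In every directed star forest whose arcs point away from centres, each vertex has indegree at most one, and no vertex is simultaneously a centre and a leaf (stars are vertex-disjoint). Each of the configurations above violates one of these properties, so no two arcs corresponding to adjacent inner vertices can lie in the same $F_i$, giving a proper colouring with $21\log\D$ new colours.

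The main obstacle is the case analysis verifying that every adjacency in $\G$ between recoloured inner vertices falls into one of the forbidden configurations for a directed star forest. The orientation choice (arc $w\to v$ for $e^v\in I_v$) is exactly tailored so that the four possible adjacency types collapse into the two forbidden patterns ``same head'' and ``head-of-one-equals-tail-of-other,'' both of which are immediately ruled out by the structure of a directed star forest, making the verification clean once the bookkeeping is done.
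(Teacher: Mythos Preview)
Your proposal does not prove the stated lemma. \Cref{lem:aux} is a self-contained result about directed star arboricity: any directed multigraph with maximum indegree $c$ satisfies $\dst(D)\le 3c$. The paper does not prove this; it simply quotes it from \cite{Guiduli}. What you have written is instead a proof of \cref{claim 2} --- that the inner vertices in $\bigcup_{v\in V}I_v$ can be properly recoloured using $21\log\D$ new colours --- and your argument \emph{invokes} \cref{lem:aux} as a black box (``Applying \cref{lem:aux} with $c=7\log\D$\dots''). As a proof of \cref{claim 2} your argument is correct and matches the paper's almost verbatim, with a somewhat more explicit case analysis showing that a directed star forest in $\vec{M}$ corresponds to an independent set in $\G$. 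But as a proof of \cref{lem:aux} it is circular: you apply the very statement you were asked to establish.

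A genuine proof of \cref{lem:aux} must work for an arbitrary directed multigraph $D$, with no reference to $G$, $\G$, inner vertices, or the colouring framework; it is a purely combinatorial decomposition result about covering the arc set of $D$ by directed star forests.
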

    
    \begin{proof}[Proof of Claim \ref{claim 2}.]         
    Let $S= \bigcup_{x\in V(G)}I_x$ be the set of vertices in $\G$ to be recoloured with new colours.  Consider the directed graph $D$ with $V(D)=V(G)$ and $E(D)=\set{(w,v):e=vw, e^v\in S}$. Now each vertex in $e^v \in S$ corresponds to a directed edge in $D$. Observe also that,
    for each $w\in V(D)$,  the set of edges directed away from $w$ corresponds to a subset of $S$ that is an independent set in $\G$. Similarly, a directed star forest where the edges of each star are directed away from its centre in $D$ corresponds to an  independent set in $\G$. Furthermore, the maximum indegree in $D$ is at most $\max_{v\in V}\abs{I_v}\le\max_{v\in V}\abs{L_v}\le 7\log \D$.
        By \cref{lem:aux}
        , this implies that $D$ can be partitioned into at most $21\log \D$ directed star forests where the edges of each star are directed away from its centre. By using a different new colour for each of the corresponding independent sets in $\G$, we obtain the required colouring.
    \end{proof}

    After introducing these $21 \log \Delta$ new colours, we are  using $\Delta+28 \log \Delta$ colours in total to properly colour$\G$, as claimed.
\end{proof}

\section{Proof of Theorem \ref{thm:fin} and further comments}\label{s:thm1.3}

In this paper, we obtained asymptotically tight bounds for $\chi(\G)$ as the maximum degree of $G$ grows. The method used to obtain upper bounds for $\chi(\G)$  can be extended to yield upper bounds for $\chi(G^{\frac{k}{k}})$.
Specifically, for every integer $k\geq 2$ there exist a constant $C_k$ such that for every graph $G$ the following holds:
$$\bigg\lfloor \frac{k}{2} \bigg\rfloor \Delta(G)\leq \chi(G^{\frac{k}{k}}) \leq  \bigg\lfloor \frac{k}{2}\bigg \rfloor \Delta(G) + C_k\log \Delta(G).$$

The lower bound of $\lfloor k/2 \rfloor \Delta(G)$ comes from the cliques that are `centred' around branch vertices of maximum degree.
For the upper bound on  $\chi(G^{\frac{k}{k}})$, we follow the same four-step strategy as for $\chi(\G)$, which slightly differs depending on the parity of $k$ in the first two steps. If $k$ is even, then in Step \mref1 we use a proper colouring with $\Delta(G)+1$ colours to colour the branch vertices of $G^{\frac{k}{k}}$. Otherwise, if $k$ is odd, then in Step \mref1, we use a total colouring of $G$ with $\Delta(G)+O(1)$ colours, which is known to exist by~\cite{molloyReed98}, to colour the branch vertices along with the middle vertices on the subdivided edges.

In Step \mref2, we assign  a list $L_v\subseteq [ \lfloor \frac{k}{2}\rfloor \Delta(G) +C'_k\log \Delta(G)]$ of size $\Theta(\log \Delta(G) )$ to every vertex $v$. For a branch vertex $w$ and an edge $e=wv$ of $G$, we will colour the inner vertices of $e$ that are strictly closer to $w$ than to $v$  with a colour from the list $L_v$. For that, we will require the   family $\{L_v\}_{v\in V(G)}$ to have the following property: for each branch vertex $w$, there exist (simple) subsets $L_{v,w}\subseteq L_v\setminus F_{v,w}$  of size $\lfloor k/2 \rfloor$ for each $ v\in N_G(w)$ such that no element appears in two different sets $L_{v,w}$ and $L_{v',w}$. Here $F_{v,w}$ has size at most three and contains $c(v),c(w)$, and, when $k$ is odd, the colour of the middle vertex of the subdivided edge $vw$. 
To study the likelihood of the existence of these subsets of $\{L_v\}_{v\in V(G)}$ we appeal to the generalised Hall's condition in place of Hall's condition, that is, $|N(S)|\geq \lfloor k/2 \rfloor|S|$ (see \cite[Corollary 1.2]{aharoni2000hall}), for every set $S$. Steps \mref3 and \mref4 are identical.

\medskip
A more complicated problem arises when dealing with fractions $\frac{r}{s}$ when $r$ is greater than $s$. In this situation, we must use more colours than the ones used by Brooks' Theorem, as the colouring of a specific  branch vertex may influence not only their neighbours in $G$, but vertices at a higher distance.

\paragraph{Acknowledgments}  This work was initiated at the annual workshop of the Combinatorics and Graph Theory group of Freie Universit\"at Berlin in Wilhelmsaue in September 2023. The authors would like to thank the institution for enabling this research. Finally, the fourth author would like to thank Tibor Szab\'o and the Combinatorics and Graph Theory group at Freie Universit\"at Berlin
for their hospitality during the research visit. Additionally, we thank Moharram Iradmusa for bringing the papers~\cite{hartke2012coloring,iradmusa2010colorings} to our attention. Finally, we thank the anonymous referees for their suggestions on the manuscript, which have improved the quality of the document.

M.A.: This project has received funding from the European Union’s Horizon 2020 research and innovation
programme under the Marie Sk\l{}odowska-Curie grant agreement No 101034413
\includegraphics[width=5.5mm, height=4mm]{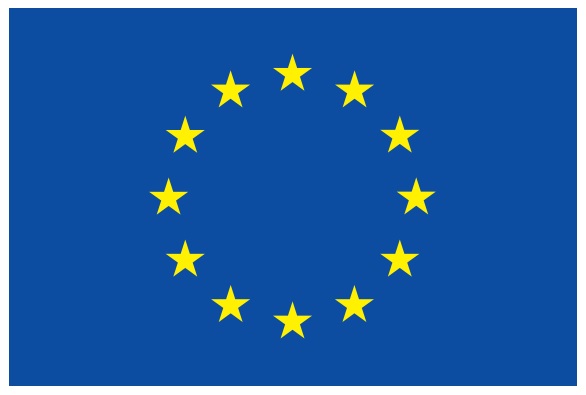}.

S.B.: The research leading to these results was supported by EPSRC, grant no.\ EP/V048287/1. There are no additional data beyond that contained within the main manuscript.

S.R.: Funded by the Deutsche Forschungsgemeinschaft (DFG, German Research Foundation) under Germany´s Excellence Strategy – The Berlin Mathematics Research Center MATH+ (EXC-2046/1, project ID: 390685689).

J.R.  acknowledges the support of the Grant PID2020-113082GB-I00 funded by MICIU/AEI/10.13039/501100011033, and the Severo Ochoa and María de Maeztu Program for Centers and Units of Excellence in R\&D (CEX2020-001084-M).

\bibliographystyle{amsplain}
\bibliography{bib}

\end{document}